\journal{Advances in applied mathematics}
\newdefinition{Def}{Definition}[section]
\newdefinition{Rem}{Remark}[section]
\newdefinition{Exa}{Example}[section]
\newtheorem{The}{Theorem}[section]
\newtheorem{Pro}{Proposition}[section]
\newtheorem{Lem}{Lemma}[section]
\newproof{proof}{Proof}[section]
\newcommand{\Remref}[1]{Remark \ref{#1}}
\newcommand{\Tabref}[1]{Table \ref{#1}}
\newcommand{\Figref}[1]{Fig. \ref{#1}}
\newcommand{\Lemref}[1]{Lemma \ref{#1}}
\newcommand{\Secref}[1]{Section \ref{#1}}
\newcommand{\Theref}[1]{Theorem \ref{#1}}
\newcommand{\Proref}[1]{Proposition \ref{#1}}
\newcommand{\bb}[1]{\mathbb{#1}}
\newcommand{\norm}[1]{\left\|{#1}\right\|}
\newcommand{\abs}[1]{\left|{#1}\right|}
\newcommand{\scalarp}[1]{\left\langle{#1}\right\rangle}
\begin{document}
\begin{frontmatter}

\title{Weighted Thresholding and Nonlinear Approximation}

\author{Emil Solsb{\ae}k Ottosen}
\ead{emilo@math.aau.dk}
\address{Department of Mathematical Sciences, Aalborg University, Skjernvej 4, 9220 Aalborg {\O}, Denmark}

\author{Morten Nielsen}
\ead{mnielsen@math.aau.dk}
\address{Department of Mathematical Sciences, Aalborg University, Skjernvej 4, 9220 Aalborg {\O}, Denmark}

\begin{abstract}
We present a new method for performing nonlinear approximation with redundant dictionaries. The method constructs an $m-$term approximation of the signal by thresholding with respect to a weighted version of its canonical expansion coefficients, thereby accounting for dependency between the coefficients. The main result is an associated strong Jackson embedding, which provides an upper bound on the corresponding reconstruction error. To complement the theoretical results, we compare the proposed method to the pure greedy method and the Windowed-Group Lasso by denoising music signals with elements from a Gabor dictionary.
\end{abstract}

\begin{keyword}
Weighted thresholding\sep nonlinear approximation\sep time-frequency analysis\sep Gabor frames\sep modulation spaces\sep social sparsity.
\MSC[2010] 42B35 \sep 42C15 \sep 41A17.
\end{keyword}

\end{frontmatter}

\section{Introduction}
Let $X$ be a Banach space equipped with a norm $\|\cdot\|_X$. We consider the problem of approximating a possibly complicated function $f\in X$ using linear combinations of simpler functions $\mathcal{D}:=\{g_k\}_{k\in \bb{N}}$. We assume $\mathcal{D}$ forms a complete dictionary for $X$ such that $\|g_k\|_X=1$, for all $k\in \bb{N}$, and $\text{span}\{g_k\}_{k\in \bb{N}}$ is dense in $X$. A natural way of performing the approximation is to construct an $m$-term approximation $f_m$ to $f$ using a linear combination of at most $m$ elements from $\mathcal{D}$ \cite{Schmidt07,Ismagilov74,Oskolkov78}. This leads us to consider the set
\begin{equation*}
\Sigma_m(\mathcal{D}):=\left\{\sum_{k\in \Delta}c_kg_k ~\Big|~\Delta\subset \bb{N},~\abs{\Delta} \leq m\right\},\quad m\in \bb{N}.
\end{equation*}
We note that $\Sigma_m(\mathcal{D})$ is nonlinear since a sum of two elements from $\Sigma_m(\mathcal{D})$ will in general need $2m$ terms in its representation by $\{g_k\}_{k\in \bb{N}}$. We measure the approximation error associated to $\Sigma_m(\mathcal{D})$ by
\begin{equation}\label{eq:errorofbestmtermapprox}
\sigma_m(f,\mathcal{D})_{X}:=\inf_{h\in \Sigma_m(\mathcal{D})}\norm{f-h}_{X},\quad f\in X.
\end{equation}
One of the main challenges of nonlinear approximation theory is to characterize the elements $f\in X$, which have a prescribed rate of approximation $\alpha>0$ \cite{Petrushev1988,Grochenig2000,DeVore1992}. This is usually done by defining an approximation space $\mathcal{A}\subseteq X$ with the property
\begin{equation}\label{eq:JacksonInequality}
\sigma_m(f,\mathcal{D})_{X}= \mathcal{O}\left(m^{-\alpha}\right),\quad \forall f \in \mathcal{A}.
\end{equation}
It is often difficult to characterize the elements of $\mathcal{A}$ directly and a standard approach is therefore to construct a simpler space $\mathcal{K}\subseteq X$ together with a continous embedding $\mathcal{K}\hookrightarrow \mathcal{A}$. The space $\mathcal{K}$ is referred to as a smoothness or sparseness space and the continuous embedding as a Jackson embedding \cite{Butzer72,DeVore1993}. 

In the special case $\{g_k\}_{k\in \bb{N}}$ forms an orthonormal basis for a Hilbert space $\mathcal{H}$, it follows from Parseval's identity that the best $m-$term approximation to $f$ is obtained by thresholding the (unique) expansion coefficients and keeping only the $m$ largest coefficients. For a redundant dictionary $\mathcal{D}$, the problem of constructing the best $m-$term approximation is in general computationally intractable \cite{Davis1997}. For this reason, various algorithms have been constructed to produce fast and good (but not necessarily the best) $m-$term approximations \cite{Darken1993, DeVore1996, Gribonval2004}. By "good approximations" we mean an algorithm $A_m:f\rightarrow f_m$ for which there exists an approximation space $\mathcal{T}\subseteq X$ with 
\begin{equation*}
\norm{f-f_m}_{X}= \mathcal{O}\left(m^{-\alpha}\right),\quad \forall f\in \mathcal{T}.
\end{equation*} 
An associated embedding of the type $\mathcal{K}\hookrightarrow \mathcal{T}$ is referred to as a strong Jackson embedding. Whereas a standard Jackson embedding only shows that the error of best $m$-term approximation decays as in \eqref{eq:JacksonInequality}, a strong Jackson embedding also provides an associated constructive algorithm with this rate of approximation \cite{Gribonval2004}.

Traditionally, the expansion coefficients are proccessed individually with an implicit assumption of independency between the coefficients \cite{DeVore1996}. However, for many signals such an assumption is not valid as the coefficients are often correlated and organised in structured sets \cite{Kowalski2009a,Kowalski2009b}. For instance, it is known that many music signals are generated by components, which are sparse in either time or frequency \cite{Plumbley2010}, resulting in sparse and structured time-frequency representations (cf. \Figref{fig:CleanNoisySpec} on page \pageref{fig:CleanNoisySpec}). In this article we present a thresholding algorithm, which incorporates a weight function to account for dependency between expansion coefficients. The main result is given in \Theref{The:mainresult} and provides a strong Jackson embedding for the proposed method. 

The idea of exploiting the structure of the expansion coefficients is somewhat similar to the one found in social sparsity \cite{Siedenburg2011,Kowalski2009c,Kowalski2013}. Social sparsity can be seen as a generalization of the classical \emph{Lasso} \cite{Tibshirani1996} (also known as basis pursuit denoising \cite{Chen2001}) where a weighted neighbourhood of each coefficient is considered for deciding whether or not to keep the coefficient. However, whereas social sparsity searches through the dictionary for sparse reconstruction coefficients, the purposed method considers weighted thresholding of the canonical frame coefficients. We compare the proposed algorithm to the Windowed-Group-Lasso (WGL) \cite{Kowalski2009b,Siedenburg2012} from social sparsity and the greedy thresholding approach  \cite{DeVore1996,Gribonval2004} from nonlinear approximation theory by denoising music signals expanded in a Gabor dictionary \cite{Grochenig2001,Christensen2016}. 

The structure of this article is as follows. In \Secref{Sec:2} we introduce the necessary tools from nonlinear approximation theory and in \Secref{Sec:3} we present the proposed algorithm and prove \Theref{The:mainresult}. In \Secref{Sec:4} we provide the link to modulation spaces and Gabor frames and in \Secref{Sec:5} we describe the numerical experiments. Finally, in \Secref{Sec:6} we give the conclusions.

\section{Elements from nonlinear approximation theory}\label{Sec:2}
In this section we define the concepts from nonlinear approximation theory that we will use throughout this article. We refer the reader to \cite{DeVore1993,DeVore98,DeVore09,Gribonval2004} for further details. For $\{a_m\}_{m\in \bb{N}}\subset \bb{C}$, we let $\{a^*_m\}_{m\in \bb{N}}$ denote a non-increasing rearrangement of $\{a_m\}_{m\in \bb{N}}$ such that $|a^*_{m}|\geq |a^*_{m+1}|$ for all $m\in \bb{N}$.
\begin{Def}
Given $\tau \in (0,\infty), q\in (0,\infty]$, we define the Lorentz space $\ell_q^\tau$ as the collection of $\left\{a_m\right\}_{m\in \bb{N}}\subset \bb{C}$ satisfying that
\begin{equation*}
\norm{\{a_m\}_{m\in \bb{N}}}_{\ell_q^\tau}:=\left\{
     \begin{array}{ll}
       \left(\sum_{m=1}^\infty \left[m^{1/\tau} \abs{a_m^*}\right]^q\frac{1}{m}\right)^{1/q},  & 0<q<\infty\\
       \sup_{m\geq 1} m^{1/\tau} \abs{a_m^*},  & q=\infty
     \end{array}
   \right.
\end{equation*}
is finite.
\end{Def}
We note that $\|\cdot\|_{\ell_\tau^\tau}=\|\cdot\|_{\ell^\tau}$ for any $\tau \in (0,\infty)$. The Lorentz spaces are rearrangement invariant (quasi-)Banach spaces (Banach spaces if $1\leq q\leq \tau<\infty$) satisfying the continuous embeddings 
\begin{equation}\label{eq:LorentzEmbeddings}
\ell_{q_1}^{\tau_1}\hookrightarrow \ell_{q_2}^{\tau_2}\quad \text{if}\quad \tau_1<\tau_2\quad \text{or if}\quad \tau_1=\tau_2 \text{ and }q_1\leq q_2,
\end{equation}
see \cite{Carro2007,DeVore98} for details. Let $\mathcal{D}:=\{g_k\}_{k\in \bb{N}}$ be a complete dictionary for a Banach space $X$ and define $\{\sigma_m(f,\mathcal{D})_X\}_{m\in \bb{N}}$ as in \eqref{eq:errorofbestmtermapprox}. We will use the following approximation spaces \cite{DeVore1993}.
\begin{Def}
Given $\alpha\in (0,\infty), q\in (0,\infty]$, we define
\begin{equation*}
\mathcal{A}_q^\alpha\left(\mathcal{D},X\right):=\left\{f\in X\Big|\norm{f}_{\mathcal{A}_q^\alpha\left(\mathcal{D},X\right)}:=\norm{\{\sigma_m(f,\mathcal{D})_X\}_{m\in \bb{N}}}_{\ell_q^{1/\alpha}}+\norm{f}_X<\infty\right\}.
\end{equation*}
\end{Def}
The quantity $\|\cdot\|_{\mathcal{A}_q^\alpha\left(\mathcal{D},X\right)}$ forms a (quasi-)norm on $\mathcal{A}_q^\alpha\left(\mathcal{D},X\right)$, and from \eqref{eq:LorentzEmbeddings} we immediately obtain the continuous embeddings
\begin{equation*}
\mathcal{A}_{q_1}^{\alpha_1}\left(\mathcal{D},X\right)\hookrightarrow \mathcal{A}_{q_2}^{\alpha_2}\left(\mathcal{D},X\right)\quad \text{if}\quad \alpha_1>\alpha_2\quad \text{or if}\quad \alpha_1=\alpha_2\text{ and }q_1\leq q_2.
\end{equation*}
We note the $f\in \mathcal{A}_q^\alpha\left(\mathcal{D},X\right)$ implies the decay in \eqref{eq:JacksonInequality} as desired. Following the approach in \cite{DeVore1996}, we define smoothness spaces as follows. 
\begin{Def}
Given $\tau\in (0,\infty), q\in (0,\infty],M>0$, we let
\begin{equation*}
\mathcal{K}_q^\tau(\mathcal{D},X,M):=\text{clos}_X\left\{\sum_{k\in \Delta}c_kg_k\in X \Big|\Delta\subset \bb{N},\abs{\Delta} <\infty,\norm{\{c_k\}_{k\in \Delta}}_{\ell_q^\tau}\leq M\right\}.
\end{equation*}
We then define $\mathcal{K}_q^\tau(\mathcal{D},X):=\cup_{M>0}\mathcal{K}_q^\tau(\mathcal{D},X,M)$ with
\begin{equation*}
\abs{f}_{\mathcal{K}_q^\tau(\mathcal{D},X)}:=\inf\left\{M>0~\Big|~f\in \mathcal{K}_q^\tau(\mathcal{D},X,M)\right\}.
\end{equation*}
\end{Def}
It can be shown that $|\cdot|_{\mathcal{K}_q^\tau(\mathcal{D},X)}$ is a (semi-quasi-)norm on $\mathcal{K}_q^\tau(\mathcal{D},X)$ and a (quasi-)norm if $\tau\in(0,1)$.
\begin{Rem}\label{Rem:GeneralSmoothnessSpace}
We note that for general $\mathcal{D}$ and $X$, $f\in \mathcal{K}_q^\tau(\mathcal{D},X)$ does not imply the existence of $\{c_k\}_{k\in \bb{N}}\in \ell_q^\tau$ with $f=\sum_{k\in \bb{N}}c_kg_k$. All realizations of $\mathcal{K}_q^\tau(\mathcal{D},X)$ considered in this article will, however, guarantee the existence of such reconstruction coefficients (cf. \Proref{Pro:hilbertianprop} below).
\end{Rem}
\begin{Exa}
Let $\mathcal{B}=\{g_k\}_{k\in \bb{N}}$ be an orthonormal basis for a Hilbert space $\mathcal{H}$. Given $\alpha\in (0,\infty), q\in (0,\infty]$ and $0<\tau=(\alpha+1/2)^{-1}<2$, we have the characterization
\begin{equation*}
\mathcal{A}_q^\alpha\left(\mathcal{B},\mathcal{H}\right)=\mathcal{K}_q^\tau\left(\mathcal{B},\mathcal{H}\right)=\left\{f\in \mathcal{H}~\Big|~\abs{f}_{\mathcal{K}_q^\tau(\mathcal{D},X)}=\norm{\{\scalarp{f,g_k}\}_{k\in \bb{N}}}_{\ell_q^\tau}<\infty\right\},
\end{equation*}
where the first equality is with equivalent (quasi-)norms \cite{Stechkin1955,DeVore1996}. 
\end{Exa}
Denoting the space of finite sequences on $\bb{N}$ by $\ell^0$, we define the reconstruction operator $R:\ell^0\rightarrow X$ by
\begin{equation}\label{eq:reconoperator}
R:\{c_k\}_{k\in \Delta}\rightarrow \sum_{k\in \Delta}c_kg_k,\quad \{c_k\}_{k\in \Delta}\in \ell^0.
\end{equation}
Recalling that $\|g_k\|_X=1$, for all $k\in \bb{N}$, we can extend this operator to a bounded operator from $\ell^1$ to $X$ since
\begin{equation}\label{eq:reconoperatorell1}
\norm{R\{c_k\}_{k\in \Delta}}_X\leq \sum_{k\in \Delta}\abs{c_k}\norm{g_k}_X=\norm{\{c_k\}_{k\in \Delta}}_{\ell^1},\quad \forall \{c_k\}_{k\in \Delta}\in \ell^0.
\end{equation}
Following the approach in \cite{Gribonval2004} we introduce the following class of dictionaries.
\begin{Def}\label{Def:hilbertdic}
Let $\mathcal{D}=\{g_k\}_{k\in \bb{N}}$ be a dictionary in a Banach space $X$. Given $\tau \in (0,\infty), q\in (0,\infty]$, we say that $\mathcal{D}$ is $\ell_q^\tau-$hilbertian if the reconstruction operator $R$ given in \eqref{eq:reconoperator} is bounded from $\ell_q^\tau$ to $X$.
\end{Def}
It follows from \eqref{eq:reconoperatorell1} and \eqref{eq:LorentzEmbeddings} that every $\mathcal{D}$ is $\ell_q^\tau-$hilbertian if $\tau<1$. According to \cite[Proposition 3]{Gribonval2004} we have the following characterization.\newpage
\begin{Pro}\label{Pro:hilbertianprop}
Assume $\mathcal{D}$ is $\ell_1^p-$hilbertian with $p\in (1,\infty)$. Let $\tau\in (0,p)$ and $q\in (0,\infty]$. For all $f\in \mathcal{K}_q^\tau(\mathcal{D},X)$, there exists some $\bm{c}:=\bm{c}_{\tau,q}(f)\in \ell_q^\tau$ with $f=R\bm{c}$ and $\|\bm{c}\|_{\ell_q^\tau}=|f|_{\mathcal{K}_q^\tau(\mathcal{D},X)}$. If $1< q\leq \tau<\infty$, then $\bm{c}$ is unique. Consequently
\begin{equation*}
\abs{f}_{\mathcal{K}_q^\tau(\mathcal{D},X)}=\min_{\bm{c}\in \ell_q^\tau, f=R\bm{c}}\norm{\bm{c}}_{\ell_q^\tau},
\end{equation*}
and
\begin{equation*}
\mathcal{K}_q^\tau(\mathcal{D},X)=\left\{\sum_{k\in \bb{N}}c_kg_k\in X ~\Bigg|~\norm{\{c_k\}_{k\in \bb{N}}}_{\ell_q^\tau}< \infty\right\}
\end{equation*}
is a (quasi-)Banach space with $\mathcal{K}_q^\tau(\mathcal{D},X)\hookrightarrow X$.
\end{Pro}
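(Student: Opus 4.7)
The plan is to reduce everything to properties of the reconstruction operator $R$ once it has been extended to $\ell_q^\tau$. Since $\tau<p$, the Lorentz embedding \eqref{eq:LorentzEmbeddings} gives $\ell_q^\tau\hookrightarrow\ell_1^p$; composing with the hypothesized bounded $R\colon\ell_1^p\to X$ produces a bounded extension $R\colon\ell_q^\tau\to X$. This already delivers the inclusion $\supseteq$ of the displayed set equality, and once reconstruction coefficients are in hand it will also give the continuous embedding $\mathcal{K}_q^\tau(\mathcal{D},X)\hookrightarrow X$ via $\norm{f}_X=\norm{R\bm{c}}_X\lesssim\norm{\bm{c}}_{\ell_q^\tau}=\abs{f}_{\mathcal{K}_q^\tau(\mathcal{D},X)}$.

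The heart of the matter is the existence of coefficients. Fix $f\in\mathcal{K}_q^\tau(\mathcal{D},X)$ with $\abs{f}_{\mathcal{K}_q^\tau(\mathcal{D},X)}\le M$. From the defining closure property, one can pick finitely supported $\bm{c}_n$ with $\norm{\bm{c}_n}_{\ell_q^\tau}\le M$ and $R\bm{c}_n\to f$ in $X$. Since the $\bm{c}_n$ are uniformly bounded in $\ell_q^\tau$, their coordinates are uniformly bounded, so a diagonal extraction yields a subsequence (not relabelled) converging coordinate-wise to some $\bm{c}\in\bb{C}^{\bb{N}}$. Lower semicontinuity of the Lorentz quasi-norm under coordinate-wise convergence, a Fatou-type statement on the non-increasing rearrangement, then forces $\norm{\bm{c}}_{\ell_q^\tau}\le M$; taking the infimum over admissible $M$ gives $\norm{\bm{c}}_{\ell_q^\tau}\le\abs{f}_{\mathcal{K}_q^\tau(\mathcal{D},X)}$.

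The step I expect to be the main obstacle is identifying $R\bm{c}$ with $f$, because coordinate-wise convergence does not transfer through $R$ automatically. In the reflexive range where $1<q<\infty$ and $1<\tau<\infty$ I would upgrade the coordinate-wise limit to a weak limit in $\ell_q^\tau$ via Banach--Alaoglu---the weak limit must coincide with the coordinate-wise one since the coordinate evaluations are bounded linear functionals---and then weak-to-weak continuity of $R$ combined with $R\bm{c}_n\to f$ strongly forces $f=R\bm{c}$. Outside this range one can either choose an auxiliary exponent $\tau<\tau'<p$ with $\tau'>1$ to work inside a reflexive ambient Lorentz space, or, when $\tau\le 1$, use the stronger embedding $\ell_q^\tau\hookrightarrow\ell^1$ together with dominated convergence on the series $\sum_k c_n(k)g_k$ to pass to the limit directly in $X$. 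Together with the previous paragraph this gives both $f=R\bm{c}$ with $\norm{\bm{c}}_{\ell_q^\tau}=\abs{f}_{\mathcal{K}_q^\tau(\mathcal{D},X)}$ and the minimum formula, since the reverse inequality $\abs{f}_{\mathcal{K}_q^\tau(\mathcal{D},X)}\le\norm{\bm{c}}_{\ell_q^\tau}$ for any admissible $\bm{c}$ follows by approximating $\bm{c}$ with its truncations, which are admissible finite expansions.

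Finally, uniqueness in the range $1<q\le\tau<\infty$ would follow from strict (in fact uniform) convexity of $\ell_q^\tau$ there: the admissible preimages of $f$ form a closed affine translate of $\ker R$, and a strictly convex norm has at most one minimizer on such a set. The (quasi-)Banach structure of $\mathcal{K}_q^\tau(\mathcal{D},X)$ is then transported from $\ell_q^\tau$ through the minimum formula, completeness following either via the identification with a quotient $\ell_q^\tau/\ker R$ or directly by a Cauchy-sequence argument that uses boundedness of $R$ and completeness of $\ell_q^\tau$.
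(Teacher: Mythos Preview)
The paper does not actually prove this proposition: it is quoted from \cite[Proposition~3]{Gribonval2004} and stated without argument, so there is no in-paper proof to compare against.

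Your outline is essentially the standard route and matches how the cited reference proceeds: bounded extension of $R$ to $\ell_q^\tau$ via $\ell_q^\tau\hookrightarrow\ell_1^p$, diagonal extraction from a norm-bounded approximating sequence of finite expansions, Fatou-type lower semicontinuity of the Lorentz quasi-norm under coordinate-wise limits, and identification of the limit through weak compactness in a reflexive ambient Lorentz space (or the $\ell^1$ argument when $\tau\le 1$). The reverse inequality via truncation and the embedding $\mathcal{K}_q^\tau\hookrightarrow X$ are handled correctly.

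One point deserves more care. Your uniqueness argument invokes strict convexity of $\ell_q^\tau$ for $1<q\le\tau<\infty$, but the standard Lorentz functional $\norm{\cdot}_{\ell_q^\tau}$ is not, in general, strictly convex as written; what is known is that $\ell_q^\tau$ admits an \emph{equivalent} uniformly convex norm in this range. Uniqueness of a norm-minimizer on an affine subspace is not preserved under passing to equivalent norms, so you cannot simply import strict convexity. You would need either a direct verification that the specific functional $\bm{c}\mapsto\norm{\bm{c}}_{\ell_q^\tau}$ has a unique minimizer on translates of $\ker R$ (e.g.\ via the strict monotonicity of the rearrangement map combined with strict convexity of the outer $\ell^q$-sum), or a duality argument identifying the minimizer through the norming functional. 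Apart from this gap the sketch is sound.
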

In the next section we describe the proposed algorithm using the framework presented in this section.
\section{Weighted thresholding}\label{Sec:3}
Let $\mathcal{D}:=\{g_k\}_{k\in \bb{N}}$ be a complete dictionary for a Banach space $X$ and let $R$ be the reconstruction operator defined in \eqref{eq:reconoperator}. Given $\{c_k\}_{k\in \bb{N}}\subset \bb{C}$, we let $\pi:\bb{N}\rightarrow \bb{N}$ denote a bijective mapping such that $\{|c_{\pi(k)}|\}_{k\in \bb{N}}$ is non-increasing, i.e, $\{|c_{\pi(k)}|\}_{k\in \bb{N}}=\{c^*_k\}_{k\in \bb{N}}$. For $f\in X$, $\{c_k\}_{k\in \bb{N}}\subset \bb{C}$, and $m\in \bb{N}$, a standard way of constructing an $m$-term approximant to $f$ from $\mathcal{D}$ is by thresholding
\begin{equation}\label{eq:standardgreedyapproximant}
f_m:=f_m(\pi,\{c_k\}_{k\in \bb{N}},\mathcal{D}):=R\{c_{\pi(k)}\}_{k=1}^m=\sum_{k=1}^mc_{\pi(k)}g_{\pi(k)}.
\end{equation}
For all practical purpose we choose $\{c_k\}_{k\in \bb{N}}\subset \bb{C}$ as reconstruction coefficients for $f$ such that $f=R\{c_k\}_{k\in \bb{N}}$ (cf. \Remref{Rem:GeneralSmoothnessSpace}). With this choice, the approximants $\{f_m\}_{m\in \bb{N}}$ converge to $f$ as $m\rightarrow \infty$. 

The thresholding approach in \eqref{eq:standardgreedyapproximant} chooses the $m$ elements from $\mathcal{D}$ corresponding to the $m$ largest of the coefficients $\{c_k\}_{k\in \bb{N}}$. As mentioned in the introduction, many real world signals have an inherent structure between the expansion coefficients, which should be accounted for in the approximation procedure. Therefore, we would like to construct an algorithm which preserves local coherence, such that a small coefficient $c_1$ might be preserved, in exchange for a larger (isolated) coefficient $c_2$, if $c_1$ belongs to a neighbourhood with many large coefficients. This leads us to consider banded Toeplitz matricies. 

Let $\Lambda$ denote a banded non-negative Toeplitz matrix with bandwidth $\Omega\in \bb{N}$, i.e., the non-zero entries $\Lambda_{(i,j)}$ satisfy $|i-j|\leq\Omega$. We assume the scalar on the diagonal of $\Lambda$ is positive and we define $\{c_k^\Lambda\}_{k\in \bb{N}}:=\Lambda(\{\abs{c_k}\}_{k\in \bb{N}})$ for $\{c_k\}_{k\in \bb{N}}\subset \bb{C}$.
\begin{Exa}
With $\Omega=2$ we obtain
\begin{equation*}
\{c_k^\Lambda\}_{k\in \bb{N}}=
\begin{bmatrix}
\lambda_0 & \lambda_1 & \lambda_2 & 0 & 0 & 0 & \cdots\\
\lambda_{-1} & \lambda_0 & \lambda_1 & \lambda_2 & 0 & 0 & \cdots\\
\lambda_{-2} & \lambda_{-1} & \lambda_0 & \lambda_1 & \lambda_2 & 0& \cdots\\
0 & \lambda_{-2} & \lambda_{-1} & \lambda_0 & \lambda_1 & \lambda_2 & \cdots\\
\vdots & \vdots & \vdots & \vdots & \vdots & \vdots & \ddots
\end{bmatrix}
\begin{bmatrix}
\abs{c_1}\\
\abs{c_2}\\
\abs{c_3}\\
\abs{c_4}\\
\vdots
\end{bmatrix},
\end{equation*}
with $\lambda_0>0$ and $\lambda_l\geq 0$ for all $l\in\{-2,-1,1,2\}$. We note that
\begin{equation*}
c_k^\Lambda=\lambda_{-2}\abs{c_{k-2}}+\cdots+\lambda_{0}\abs{c_{k}}+\cdots+\lambda_{2}\abs{c_{k+2}}=\sum_{j=k-\Omega}^{k+\Omega}\lambda_{j-k}\abs{c_{j}},
\end{equation*}
for all $k\in \bb{N}$. 
\end{Exa}
Given $\{c_k\}_{k\in \bb{N}}\subset \bb{C}$, we let $\pi_\Lambda:\bb{N}\rightarrow \bb{N}$ denote a bijective mapping such that the sequence $\{c_{\pi_\Lambda(k)}^\Lambda\}_{k\in \bb{N}}$ is non-increasing. If $\Lambda$ is the identity map, we write $\pi$ instead of $\pi_\Lambda$ to be consistent with the notation of \eqref{eq:standardgreedyapproximant}. We shall use the following technical result in the proof of \Theref{The:mainresult}.
\begin{Lem}\label{Lem:sequencehelper}
Let $\Lambda$ be a banded non-negative Toeplitz matrix with bandwidth $\Omega\in \bb{N}$ and let $p \in (0,\infty), q\in (0,\infty]$. There exists a constant $C>0$, such that for any non-increasing sequence $\{c_k\}_{k\in \bb{N}}\in \ell_q^p$ we have the estimate
\begin{equation*}
\norm{\{c_{\pi_\Lambda(k)}\}_{k=m+1}^\infty}_{\ell_q^p}\leq C\norm{\{c_k\}_{k=m+1-\Omega}^\infty}_{\ell_q^p},\quad \forall m\geq \Omega.
\end{equation*}
\end{Lem}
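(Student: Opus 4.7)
The plan is to control the rearranged sequence $\{c_{\pi_\Lambda(m+k)}\}_{k\geq 1}$ pointwise by a non-increasing shift of $\{c_k\}$ and then exploit the rearrangement invariance of the Lorentz norm. Two elementary facts about $\{c_k^\Lambda\}$ are the foundation: (i) for $k\geq\Omega+1$ the convolution sum is unrestricted, and the telescoping identity $c_{k+1}^\Lambda-c_k^\Lambda=\sum_{l=-\Omega}^{\Omega}\lambda_l\bigl(|c_{k+1+l}|-|c_{k+l}|\bigr)$ combined with $\lambda_l\geq 0$ and the monotonicity of $\{c_k\}$ yields $c_{k+1}^\Lambda\leq c_k^\Lambda$; (ii) writing $M:=\sum_{l=-\Omega}^{\Omega}\lambda_l$, the same monotonicity yields $c_k^\Lambda\leq M\,c_{k-\Omega}$ for every $k\geq\Omega+1$.

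Next I would prove the key pointwise bound
\[
c_{\pi_\Lambda(m+k)}^\Lambda \leq c_{m+k}^\Lambda \qquad (k\geq 1),
\]
which makes sense since $m+k\geq\Omega+1$ thanks to $m\geq\Omega$. The argument is a rank count: any index $j$ with $c_j^\Lambda>c_{m+k}^\Lambda$ lies either in the boundary $\{1,\ldots,\Omega\}$ or in the interior $\{\Omega+1,\Omega+2,\ldots\}$, and by (i) every such interior $j$ must satisfy $j\leq m+k-1$. Hence at most $\Omega+(m+k-1-\Omega)=m+k-1$ indices give a strictly larger value, which forces the $(m+k)$-th largest value of $\{c_j^\Lambda\}_{j\in\bb{N}}$ to be at most $c_{m+k}^\Lambda$. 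Combining with the diagonal lower bound $c_{\pi_\Lambda(m+k)}^\Lambda\geq \lambda_0\, c_{\pi_\Lambda(m+k)}$ and with (ii) produces the pointwise control
\[
c_{\pi_\Lambda(m+k)} \leq \frac{M}{\lambda_0}\, c_{m+k-\Omega}, \qquad k\geq 1.
\]

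Finally I would invoke the elementary rearrangement principle: if $a_k\leq b_k$ for all $k$ with $\{b_k\}$ non-increasing, then the non-increasing rearrangement $\{a_k^*\}$ still obeys $a_k^*\leq b_k$, since $a_j>b_k$ forces $b_j\geq a_j>b_k$ and hence $j<k$, yielding fewer than $k$ such indices. Applied with $a_k=c_{\pi_\Lambda(m+k)}$ and the non-increasing $b_k=(M/\lambda_0)\,c_{m+k-\Omega}$, together with the rearrangement invariance of $\|\cdot\|_{\ell_q^p}$, this gives
\[
\bigl\|\{c_{\pi_\Lambda(m+k)}\}_{k\geq 1}\bigr\|_{\ell_q^p} \leq \frac{M}{\lambda_0}\, \bigl\|\{c_k\}_{k=m+1-\Omega}^{\infty}\bigr\|_{\ell_q^p},
\]
so $C:=M/\lambda_0$ does the job. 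The delicate step is the rank count: a naive attempt to identify the top-$m$ set of $\pi_\Lambda$ explicitly with $\{\Omega+1,\ldots,m\}$ breaks down when $\{c_j^\Lambda\}$ has plateaus that $\pi_\Lambda$ traverses unfavorably, whereas the global bound on the $(m+k)$-th largest value never commits to a particular choice of top-$m$ set and therefore handles every admissible $\pi_\Lambda$.
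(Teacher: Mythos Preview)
Your proof is correct and proceeds along a genuinely different line from the paper's. The paper argues by comparing the multiset $\{|c_{\pi_\Lambda(k)}|\}_{k>m}$ with $\{|c_k|\}_{k>m}$: whenever an index $k'\leq m$ is present in the first set, some index $k''\geq m+1$ has been displaced, and the ordering inequality $c_{k'}^\Lambda\leq c_{k''}^\Lambda$ then bounds $|c_{k'}|$ by $(2\Omega+1)\lambda_{\max}/\lambda_0$ times some $|c_{\tilde k}|$ with $\tilde k\geq m+1-\Omega$. From there the Lorentz estimate is declared to follow directly. Your route instead exploits a structural fact the paper does not mention, namely that $\{c_k^\Lambda\}_{k\geq\Omega+1}$ is itself non-increasing when $\{c_k\}$ is, and turns this into a clean rank count yielding the uniform pointwise bound $c_{\pi_\Lambda(m+k)}\leq (M/\lambda_0)\,c_{m+k-\Omega}$ for every $k\geq 1$. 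This makes the passage to the Lorentz norm completely transparent via the rearrangement principle you state, whereas in the paper's argument the corresponding step (matching displaced elements to distinct dominating elements in the tail, or otherwise controlling the rearrangement) is left implicit. Your approach thus buys an explicit constant $C=M/\lambda_0$ and a fully self-contained justification; the paper's argument is shorter but leaves more to the reader at the final step.
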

\begin{proof}
Fix $m\in \bb{N}$ and let $\bm{a}:=\{|c_{\pi_\Lambda(k)}|\}_{k=m+1}^\infty$ and $\bm{b}:=\{|c_{k}|\}_{k=m+1}^\infty$. If $\bm{a}$ contains the same coefficients as $\bm{b}$, then there is nothing to prove. If this is not the case, then there exists $k'\leq m$, with $|c_{k'}|\in \bm{a}$, and $k''\geq m+1$, with $|c_{k''}|\notin \bm{a}$, satisfying
\begin{equation*}
\sum_{j=k'-\Omega}^{k'+\Omega}\lambda_{j-k'}\abs{c_{j}}\leq \sum_{j=k''-\Omega}^{k''+\Omega}\lambda_{j-k''}\abs{c_j}\Rightarrow\abs{c_{k'}}\leq \frac{1}{\lambda_0}\sum_{j=k''-\Omega}^{k''+\Omega}\lambda_{j-k''}\abs{c_j}.
\end{equation*}
Denoting the maximum value of the $\lambda_l$'s by $\lambda_{\text{max}}$ and the maximum value of the $\abs{c_j}$'s, for $j\in [k''-\Omega,k''+\Omega]$, by $\abs{c_{\tilde{k}}}$, we thus get
\begin{equation*}
\abs{c_{k'}}\leq \frac{\lambda_{\text{max}}}{\lambda_0}(2\Omega+1)\abs{c_{\tilde{k}}}.
\end{equation*}
Since $\tilde{k}\in [k''-\Omega,k''+\Omega]$, and $k''\geq m+1$, then $\tilde{k}\in [m+1-\Omega,\infty)$. We conclude that $c_{\tilde{k}}\in \{c_k\}_{k=m+1-\Omega}^\infty$. The lemma follows directly from this observation.\qed
\end{proof}
Given $f\in X$, a set of reconstruction coefficients $\{c_k\}_{k\in \bb{N}}\subset \bb{C}$, and $m\in \bb{N}$, we generalize the notation of \eqref{eq:standardgreedyapproximant} and construct an $m$-term approximant to $f$ by
\begin{equation}\label{eq:weightthresholdingapproximant}
f^\Lambda_m:=f^\Lambda_m(\pi_\Lambda,\{c_k\}_{k\in \bb{N}},\mathcal{D}):=R\{c_{\pi_\Lambda(k)}\}_{k=1}^m=\sum_{k=1}^mc_{\pi_\Lambda(k)}g_{\pi_\Lambda(k)}.
\end{equation}
If $\Lambda$ is the identity map, we just obtain the approximant $f_m$ given in \eqref{eq:standardgreedyapproximant}. If not, we obtain an approximant which chooses the elements of $\{g_k\}_{k\in \bb{N}}$ corresponding to the indices of the $m$ largest of the weighted coefficients $\{c_k^\Lambda\}_{k\in \bb{N}}$. We generalize the notation of \cite{Gribonval2004} and define weighted thresholding approximation spaces as follows.	
\begin{Def}
Let $\Lambda$ be a banded non-negative Toeplitz matrix with bandwidth $\Omega\in \bb{N}$. Given $\alpha\in (0,\infty), q\in (0,\infty]$, we define
\begin{equation*}
\mathcal{T}_q^\alpha\left(\mathcal{D},X,\Lambda\right):=\left\{f\in X~\Big|~\norm{f}_{\mathcal{T}_q^\alpha\left(\mathcal{D},X,\Lambda\right)}:=\abs{f}_{\mathcal{T}_q^\alpha\left(\mathcal{D},X,\Lambda\right)}+\norm{f}_X<\infty\right\},
\end{equation*}
with
\begin{equation*}
\abs{f}_{\mathcal{T}_q^\alpha\left(\mathcal{D},X,\Lambda\right)}:=\left\{
     \begin{array}{ll}
       \inf_{\pi_\Lambda, \{c_k\}_{k\in \bb{N}}}\left(\sum_{m=1}^\infty \left[m^\alpha\norm{f-f^\Lambda_m}_X\right]^q \frac{1}{m}\right)^{1/q},  & 0<q<\infty\\
        \inf_{\pi_\Lambda, \{c_k\}_{k\in \bb{N}}}\left(\sup_{m\geq 1}m^\alpha\norm{f-f^\Lambda_m}_X\right),  & q=\infty
     \end{array}
   \right.
\end{equation*}
When $\Lambda$ is the identity mapping, we write $\mathcal{T}_q^\alpha\left(\mathcal{D},X\right)$ instead of $\mathcal{T}_q^\alpha\left(\mathcal{D},X,\Lambda\right)$.
\end{Def}
\begin{Rem}
We note that the expression for $|f|_{\mathcal{T}_q^\alpha\left(\mathcal{D},X,\Lambda\right)}$ cannot be written using the Lorentz norm, since the sequence $\{\|f-f^\Lambda_m\|_X\}_{m\in \bb{N}}$ might not be non-increasing.
\end{Rem}
In order to prove \Theref{The:mainresult} we need to impose further assuptions on the dictionary $\mathcal{D}$. We call $\mathcal{D}$ an atomic decomposition (AD) \cite{Feichtinger89I,Feichtinger89II} for $X$, with respect to $\ell_q^\tau$, if there exists a sequence $\{\tilde{g}_k\}_{k\in \bb{N}}$, in the dual space $X'$, such that
\begin{enumerate}
\item There exist $0<C'\leq C''<\infty$ with
\begin{equation*}
C'\norm{\{\scalarp{f,\tilde{g}_k}\}_{k\in \bb{N}}}_{\ell_q^\tau}\leq \norm{f}_X\leq C'' \norm{\{\scalarp{f,\tilde{g}_k}\}_{k\in \bb{N}}}_{\ell_q^\tau},\quad \forall f\in X.
\end{equation*}

\item The reconstruction operator $R$ given in \eqref{eq:reconoperator} is bounded from $\ell_q^\tau$ onto $X$ and we have the expansions
\begin{equation*}
R(\{\langle f,\tilde{g}_k\rangle\}_{k\in \bb{N}})=\sum_{k\in \bb{N}}\scalarp{f,\tilde{g}_k}g_k=f,\quad \forall f\in X.
\end{equation*}
\end{enumerate}
\begin{Exa}
Standard examples of ADs are Gabor frames for modulation spaces \cite{Grochenig2001} and wavelets for Besov spaces \cite{DeVore1992}. However, many other examples have been constructed in the general framework of decomposition spaces \cite{Feichtinger85,Feichtinger87,Borup07}. For instance curvelets, shearlets and nonstationary Gabor frames (or generalized shift-invariant systems) \cite{Borup07,Voigtlaender2017,Ottosen2017}.
\end{Exa}
We can now present the main result of this article. 
\begin{The}\label{The:mainresult}
Let $\Lambda$ be a banded non-negative Toeplitz matrix with bandwidth $\Omega\in \bb{N}$ and let $p\in (1,\infty)$, $\tau\in (0,p)$, $q\in (0,\infty]$. If $\mathcal{D}=\{g_k\}_{k\in \bb{N}}$ is $\ell_1^p-$hilbertian and forms an AD for $X$, with respect to $\ell_q^\tau$, then
\begin{equation*}
\mathcal{K}_q^\tau(\mathcal{D},X)\hookrightarrow \mathcal{T}_q^\alpha\left(\mathcal{D},X,\Lambda\right)\hookrightarrow \mathcal{A}_q^\alpha\left(\mathcal{D},X\right),
\end{equation*}
with $\alpha=1/\tau-1/p>0$.
\end{The}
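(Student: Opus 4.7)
I would prove the two embeddings separately. The right-hand one is essentially by definition: each weighted thresholding approximant $f^\Lambda_m$ lies in $\Sigma_m(\mathcal{D})$, so $\sigma_m(f,\mathcal{D})_X \leq \norm{f-f^\Lambda_m}_X$ for every admissible pair $(\pi_\Lambda,\{c_k\}_{k\in\bb{N}})$ and every $m\in\bb{N}$. The sequence $\{\sigma_m(f,\mathcal{D})_X\}$ is already non-increasing, so its $\ell_q^{1/\alpha}$ Lorentz quasinorm coincides with the raw sum appearing in $|\cdot|_{\mathcal{T}_q^\alpha(\mathcal{D},X,\Lambda)}$; termwise comparison followed by taking the infimum over $(\pi_\Lambda,\{c_k\}_{k\in\bb{N}})$ gives $\norm{\{\sigma_m(f,\mathcal{D})_X\}}_{\ell_q^{1/\alpha}} \leq |f|_{\mathcal{T}_q^\alpha(\mathcal{D},X,\Lambda)}$, and adding $\norm{f}_X$ to both sides yields continuity of the embedding $\mathcal{T}_q^\alpha(\mathcal{D},X,\Lambda)\hookrightarrow\mathcal{A}_q^\alpha(\mathcal{D},X)$.

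The left-hand embedding is the substantive part. Given $f\in\mathcal{K}_q^\tau(\mathcal{D},X)$, \Proref{Pro:hilbertianprop} (applicable since $\mathcal{D}$ is $\ell_1^p$-hilbertian and $\tau<p$) delivers coefficients $\bm{c}=\{c_k\}_{k\in\bb{N}}\in\ell_q^\tau$ with $f=R\bm{c}$ and $\norm{\bm{c}}_{\ell_q^\tau}=|f|_{\mathcal{K}_q^\tau(\mathcal{D},X)}$; I would plug these into the infimum defining $|f|_{\mathcal{T}_q^\alpha(\mathcal{D},X,\Lambda)}$. Writing $f-f^\Lambda_m=R(\{c_{\pi_\Lambda(k)}\}_{k=m+1}^\infty)$ and using the $\ell_1^p$-hilbertian hypothesis to dominate the $X$-norm by the $\ell_1^p$-Lorentz norm, the problem reduces to controlling the weighted tail $\norm{\{c_{\pi_\Lambda(k)}\}_{k=m+1}^\infty}_{\ell_1^p}$. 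Here \Lemref{Lem:sequencehelper} (with its parameters $q=1$, $p=p$) trades the weighted rearrangement for an unweighted tail of the non-increasing rearrangement $\{c_k^*\}$, yielding
\[
\norm{\{c_{\pi_\Lambda(k)}\}_{k=m+1}^\infty}_{\ell_1^p} \leq C \norm{\{c_k^*\}_{k=m+1-\Omega}^\infty}_{\ell_1^p}, \qquad m\geq\Omega,
\]
with the finitely many cases $m<\Omega$ absorbed into the constant via $\norm{\bm{c}}_{\ell_1^p}\leq C\norm{\bm{c}}_{\ell_q^\tau}$ (which holds by \eqref{eq:LorentzEmbeddings} since $\tau<p$). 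A classical Stechkin-type inequality, obtained by splitting the sum at $n\sim m$ and using the decay $c_k^*\lesssim\norm{\bm{c}}_{\ell_\infty^\tau}k^{-1/\tau}$, then gives
\[
\left(\sum_{m=1}^\infty \bigl[m^\alpha\norm{\{c_k^*\}_{k=m+1-\Omega}^\infty}_{\ell_1^p}\bigr]^q \frac{1}{m}\right)^{1/q} \leq C' \norm{\bm{c}}_{\ell_q^\tau},
\]
with $\alpha=1/\tau-1/p>0$. Chaining the three estimates produces $|f|_{\mathcal{T}_q^\alpha(\mathcal{D},X,\Lambda)}\leq C''|f|_{\mathcal{K}_q^\tau(\mathcal{D},X)}$, and combined with $\norm{f}_X\leq C'''|f|_{\mathcal{K}_q^\tau(\mathcal{D},X)}$ (immediate from the $\ell_1^p$-hilbertian bound and the Lorentz embedding) this closes the left embedding.

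\textbf{Main obstacle.} \Lemref{Lem:sequencehelper} is phrased for a \emph{non-increasing} input sequence, whereas the reconstruction coefficients produced by \Proref{Pro:hilbertianprop} need not be monotone in the dictionary index; since the Toeplitz weight $\Lambda$ is tied to that fixed indexing of $\mathcal{D}$, we cannot simply relabel without destroying the banded structure that powers the lemma. The technical heart of the proof is this bridging step — either by extending the combinatorial swap argument underlying \Lemref{Lem:sequencehelper} so that its right-hand side is reinterpreted as the Lorentz tail of $\{c_k^*\}$ for arbitrary $\bm{c}$ (exploiting the rearrangement invariance of $\norm{\cdot}_{\ell_1^p}$ and the fact that any index $k\in A_m$ is forced by the weighted ordering to satisfy $|c_k|\leq C\max_{|j-\pi_\Lambda(m)|\leq\Omega}|c_j|$), or by replacing the Prop.~\ref{Pro:hilbertianprop} coefficients with ones already compatible with the weight matrix. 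The surrounding ingredients (the $\ell_1^p$-hilbertian reduction and the Stechkin-type Lorentz-space inequality) are standard.
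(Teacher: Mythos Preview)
Your plan tracks the paper's proof closely: the right embedding is immediate from $f_m^\Lambda\in\Sigma_m(\mathcal D)$, and the left one goes through the $\ell_1^p$-hilbertian bound on $\|f-f_m^\Lambda\|_X$, then \Lemref{Lem:sequencehelper}, then a Stechkin-type estimate (which the paper packages as a cited Jackson inequality $\|\bm d\|_{\mathcal A_q^\alpha(\mathcal B,\ell_1^p)}\le C\|\bm d\|_{\ell_q^\tau}$ for the canonical basis $\mathcal B$ of $\ell_1^p$). The one structural difference is the choice of coefficients fed into the $\mathcal T$-infimum. You use the optimal $\bm c$ from \Proref{Pro:hilbertianprop}, so that the bound in terms of $\|\bm c\|_{\ell_q^\tau}=|f|_{\mathcal K_q^\tau}$ closes the argument directly. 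The paper instead works with the canonical AD coefficients $\bm d=\{\langle f,\tilde g_k\rangle\}$, obtains $\|f\|_{\mathcal T_q^\alpha}\le C\|\bm d\|_{\ell_q^\tau}$, and only then chains $\|\bm d\|_{\ell_q^\tau}\le C'\|f\|_X=C'\|R\bm c\|_X\le C''\|\bm c\|_{\ell_q^\tau}=C''|f|_{\mathcal K_q^\tau}$ using the AD norm equivalence, \Proref{Pro:hilbertianprop}, and boundedness of $R$.

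This detour through $\bm d$ is precisely how the paper addresses your obstacle: having fixed $\bm d$, it declares ``since the Lorentz spaces are rearrangement invariant, we may assume $\bm d$ forms a non-increasing sequence,'' after which \Lemref{Lem:sequencehelper} applies verbatim and the tail is identified with $\sigma_{m-\Omega}(\bm d,\mathcal B)_{\ell_1^p}$. Your proposed alternative---extending the lemma so that its right-hand side reads $\|\{c_k^*\}_{k>m-\Omega}\|_{\ell_1^p}$ for an arbitrary input $\bm c$---is not available in general: with $\Omega=1$, small $\lambda_0$, and $c_k$ alternating between $1$ and $0$ on $\{1,\dots,2N\}$, the permutation $\pi_\Lambda$ selects the zero entries first, so the weighted tail beyond $m=N$ carries all $N$ ones while $\|\{c_k^*\}_{k>N-\Omega}\|_{\ell_1^p}$ stays bounded. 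Hence the paper's route via the canonical coefficients and the WLOG reduction is the intended bridge. You are right, however, that relabelling the dictionary so that $|d_1|\ge|d_2|\ge\cdots$ interacts with the fixed band structure of $\Lambda$, and this WLOG step deserves more scrutiny than the single sentence it receives in the paper.
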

\begin{proof}
The embedding $\mathcal{T}_q^\alpha\left(\mathcal{D},X,\Lambda\right)\hookrightarrow \mathcal{A}_q^\alpha\left(\mathcal{D},X\right)$ follows directly from the definitions of these spaces. Let us now show that $\mathcal{K}_q^\tau(\mathcal{D},X)\hookrightarrow \mathcal{T}_q^\alpha\left(\mathcal{D},X,\Lambda\right)$. Since the Lorentz spaces are rearrangement invariant, we may assume the canonical coefficients $\bm{d}:=\{d_k\}_{k\in \bb{N}}:=\{\langle f,\tilde{g}_k\rangle\}_{k\in \bb{N}}$ form a non-increasing sequence. Given $f\in \mathcal{K}_q^\tau(\mathcal{D},X)$, the $\ell_1^p-$hilbertian property of $\mathcal{D}$ implies
\begin{equation*}
\norm{f}_X=\norm{R\bm{d}}_X\leq C_1\norm{\bm{d}}_{\ell_1^p}.
\end{equation*}
Defining $f^\Lambda_m(\pi_\Lambda,\bm{d},\mathcal{D})=R\{d_{\pi_\Lambda(k)}\}_{k=1}^m$ as in \eqref{eq:weightthresholdingapproximant}, we thus get
\begin{align}\label{eq:normestimateweightthres}
\norm{f}_{\mathcal{T}_q^\alpha\left(\mathcal{D},X,\Lambda\right)}&=\inf_{\pi_\Lambda, \{c_k\}_{k\in \bb{N}}}\left(\sum_{m=1}^\infty \left[m^\alpha\norm{f-f^\Lambda_m}_X\right]^q \frac{1}{m}\right)^{1/q}+\norm{f}_X\notag\\
&\leq \left(\sum_{m=1}^\infty \left[m^\alpha\norm{f-f^\Lambda_m(\pi_\Lambda,\bm{d},\mathcal{D})}_X\right]^q \frac{1}{m}\right)^{1/q}+C_1\norm{\bm{d}}_{\ell_1^p}.
\end{align}
Now, since
\begin{align*}
\norm{f-f^\Lambda_m(\pi_\Lambda,\bm{d},\mathcal{D})}_X&=\norm{R\{d_{\pi_\Lambda(k)}\}_{k=m+1}^\infty}_X\\
&\leq C_1\norm{\{d_{\pi_\Lambda(k)}\}_{k=m+1}^\infty}_{\ell_1^p}\leq C_1\norm{\bm{d}}_{\ell_1^p},
\end{align*}
we get the following estimate for the first $\Omega$ terms in \eqref{eq:normestimateweightthres}
\begin{equation}\label{eq:helper1}
\sum_{m=1}^{\Omega}\left[m^\alpha\norm{f-f^\Lambda_m(\pi_\Lambda,\bm{c},\mathcal{D})}_X\right]^q \frac{1}{m}\leq C_2\norm{\bm{d}}_{\ell_1^p}^q.
\end{equation}
For $m\geq \Omega+1$, \Lemref{Lem:sequencehelper} implies 
\begin{align*}
\norm{f-f_m^\Lambda(\pi_\Lambda,\bm{d},\mathcal{D})}_X&\leq C_1\norm{\{d_{\pi_\Lambda(k)}\}_{k=m+1}^\infty}_{\ell_1^p}\leq C_3\norm{\{d_{k}\}_{k=m+1-\Omega}^\infty}_{\ell_1^p}\\
&=C_3\norm{\bm{d}-\{d_{k}\}_{k=1}^{m-\Omega}}_{\ell_1^p}=C_3\sigma_{m-\Omega}(\bm{d},\mathcal{B})_{\ell_1^p},
\end{align*}
with $\mathcal{B}$ denoting the canonical basis of $\ell_1^p$. Hence,
\begin{align}\label{eq:helper2}
\sum_{m=\Omega+1}^\infty \left[m^\alpha\norm{f-f^\Lambda_m(\pi_\Lambda,\bm{d},\mathcal{D})}_X\right]^q \frac{1}{m}&\leq C_3\sum_{m=\Omega+1}^\infty \frac{\left[m^\alpha\sigma_{m-\Omega}(\bm{d},\mathcal{B})_{\ell_1^p}\right]^q}{m}\notag\\
&=C_3\sum_{m=1}^\infty \frac{\left[(m+\Omega)^\alpha\sigma_{m}(\bm{d},\mathcal{B})_{\ell_1^p}\right]^q}{m+\Omega}\notag\\
&\leq C_4\norm{\{\sigma_m(\bm{d},\mathcal{B})_{\ell_1^p}\}_{m\in \bb{N}}}_{\ell_q^{1/\alpha}}^q.
\end{align}
Combining \eqref{eq:helper1} and \eqref{eq:helper2}, then \eqref{eq:normestimateweightthres} yields
\begin{equation*}
\norm{f}_{\mathcal{T}_q^\alpha\left(\mathcal{D},X,\Lambda\right)}\leq C_5\left(\norm{\{\sigma_m(\bm{d},\mathcal{B})_{\ell_1^p}\}_{m\in \bb{N}}}_{\ell_q^{1/\alpha}}+\norm{\bm{d}}_{\ell_1^p}\right)=C_5\norm{\bm{d}}_{\mathcal{A}_q^\alpha\left(\mathcal{B},\ell_1^p\right)}.
\end{equation*}
Applying \cite[Theorem 3.1]{Nielsen2001} we thus get
\begin{equation}\label{eq:helper3}
\norm{f}_{\mathcal{T}_q^\alpha\left(\mathcal{D},X,\Lambda\right)}\leq C_6\abs{\bm{d}}_{\mathcal{K}_q^\tau(\mathcal{B},\ell_1^p)}=C_6\norm{\bm{d}}_{\ell_q^\tau}\leq C_7\norm{f}_{X}.
\end{equation}
Finally, since $\mathcal{D}$ is $\ell_1^p-$hilbertian, \Proref{Pro:hilbertianprop} states that we can find $\bm{c}\in \ell_q^\tau$ with $f=R\bm{c}$ and $\|\bm{c}\|_{\ell_q^\tau}=|f|_{\mathcal{K}_q^\tau(\mathcal{D},X)}$, such that \eqref{eq:helper3} yields
\begin{equation*}
\norm{f}_{\mathcal{T}_q^\alpha\left(\mathcal{D},X,\Lambda\right)}\leq C_7\norm{R\bm{c}}_{X}\leq C_8\norm{\bm{c}}_{\ell_q^\tau}=C_8\abs{f}_{\mathcal{K}_q^\tau(\mathcal{D},X)}.
\end{equation*}
This completes the proof. \qed
\end{proof}
\begin{Rem}
The assumption in \Theref{The:mainresult} of $\mathcal{D}$ being $\ell_1^p-$hilbertian directly implies the boundedness of $R:\ell_q^\tau\rightarrow X$ in the definition of an AD since $\tau\in (0,p)$. It should also be noted that if $\Lambda$ is the identity operator, then \Theref{The:mainresult} holds without the assumption of an AD --- this was proven in \cite[Theorem 6]{Gribonval2004}. However, in contrast to the proof presented here, there is no constructive way of obtaining the sparse expansion coefficients in the proof of \cite[Theorem 6]{Gribonval2004}.
\end{Rem}
The Jackson embedding in \Theref{The:mainresult} is strong in the sense that it provides an associated constructive algorithm which obtains the approximation rate. Given $f\in X$, the algorithm goes as follows:
\begin{enumerate}
\item Calculate the canonical coefficients $\{d_k\}_{k\in \bb{N}}=\{\langle f,\tilde{g}_k\rangle\}_{k\in \bb{N}}$.

\item Construct the weighted coefficients $\{d^\Lambda_k\}_{k\in \bb{N}}$ according to $\Lambda$ by
\begin{equation*}
d_k^\Lambda=\sum_{j=k-\Omega}^{k+\Omega}\lambda_{j-k}\abs{d_{j}},\quad k\in \bb{N}.
\end{equation*}

\item Choose $\pi_\Lambda:\bb{N}\rightarrow \bb{N}$ such that $\{d_{\pi_\Lambda(k)}^\Lambda\}_{k\in \bb{N}}$ is non-increasing.

\item Construct an $m$-term approximation to $f$ by
\begin{equation*}
f^\Lambda_m(\pi_\Lambda,\{d_k\}_{k\in \bb{N}},\mathcal{D})=\sum_{k=1}^md_{\pi_\Lambda(k)}g_{\pi_\Lambda(k)}.
\end{equation*}
\end{enumerate}
With this construction, \Theref{The:mainresult} states that
\begin{equation*}
\norm{f-f^\Lambda_m(\pi_\Lambda,\{d_k\}_{k\in \bb{N}},\mathcal{D})}_{X}=\mathcal{O}(m^{-\alpha}),\quad \forall f\in \mathcal{K}_q^\tau(\mathcal{D},X),
\end{equation*}
with $\alpha=1/\tau-1/p>0$. In the next section we consider the particular case where the dictionary is a Gabor frame and the smoothness space is a modulation space.

\section{Modulation spaces and Gabor frames}\label{Sec:4}
In this section we choose $X$ as the modulation space $M^p:=M^p(\bb{R}^d)$, with $1\leq p<\infty$, consisting of all 
$f\in \mathcal{S}'(\bb{R}^d)$ satisfying
\begin{equation*}
\norm{f}_{M^p}:=\left(\int_{\bb{R}^d}\int_{\bb{R}^d}\abs{V_{\gamma}f(x,y)}^pdxdy\right)^{1/p}<\infty.
\end{equation*}
Here, $V_{\gamma}f(x,y)$ denotes the short-time Fourier transform of $f$ with respect to a window function $\gamma\in \mathcal{S}(\bb{R}^d)\setminus\{0\}$ . It can be shown that $M^p$ is independent of the particular choice of window function and different choices yield equivalent norms \cite{Grochenig2001}. Given $g\in \mathcal{S}(\bb{R}^d)\setminus\{0\}$ and lattice parameters $a,b\in (0,\infty)$, we consider the Gabor system 
\begin{equation*}
g_{m,n}(t):=g(t-na)e^{2\pi imb\cdot t},\quad t\in \bb{R}^d.
\end{equation*}
Assuming $\{g_{m,n}\}_{m,n}$ forms a frame for $L^2(\bb{R}^d)$ (see \cite{Christensen2016} for details), there exists a dual frame $\{\tilde{g}_{m,n}\}_{m,n}$ such that $\{g_{m,n}\}_{m,n\in \bb{Z}^d}$ is an AD for $M^p$ with respect to $\ell^p$ for all $1\leq p<\infty$\cite{Grochenig2001}. We have the following version of \cite[Proposition 3]{Grochenig2000}.
\begin{Pro}\label{eq:Promodulationcharac}
Let $\Lambda$ be a banded non-negative Toeplitz matrix with bandwidth $\Omega\in \bb{N}$. Let $1\leq \tau<p<\infty$ and $g\in \mathcal{S}(\bb{R}^d)\setminus\{0\}$. If $\mathcal{D}=\{g_{m,n}\}_{m,n\in \bb{Z}^d}$ forms a Gabor frame for $L^2(\bb{R}^d)$ and $\|g_{m,n}\|_{M^p}=1$ for all $m,n\in \bb{Z}^d$, then 
\begin{equation*}
M^\tau=\mathcal{K}_\tau^\tau(\mathcal{D},M^p)\hookrightarrow \mathcal{T}^\alpha_\tau(\Lambda,\mathcal{D},M^p)\hookrightarrow \mathcal{A}_\tau^\alpha\left(\mathcal{D},M^p\right),\quad \alpha=1/\tau-1/p,
\end{equation*}
where the first equality is with equivalent norms.
\end{Pro}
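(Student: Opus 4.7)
The plan is to split the proof into three parts: the trivial $\mathcal{T}\hookrightarrow \mathcal{A}$ inclusion, the identification $M^\tau=\mathcal{K}_\tau^\tau(\mathcal{D},M^p)$, and the main $\mathcal{K}\hookrightarrow \mathcal{T}$ inclusion, which should follow from \Theref{The:mainresult}.

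First, $\mathcal{T}_\tau^\alpha(\Lambda,\mathcal{D},M^p)\hookrightarrow \mathcal{A}_\tau^\alpha(\mathcal{D},M^p)$ is immediate from the definitions, since each approximant $f^\Lambda_m$ lies in $\Sigma_m(\mathcal{D})$, forcing $\sigma_m(f,\mathcal{D})_{M^p}\le \|f-f^\Lambda_m\|_{M^p}$.

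For the identification $M^\tau=\mathcal{K}_\tau^\tau(\mathcal{D},M^p)$ I would use the standard Gr\"ochenig result that a Gabor frame with Schwartz window is an AD for $M^\tau$ with respect to $\ell^\tau$ for every $1\le \tau<\infty$ (the reference already cited in the excerpt for $p$ gives this for each such $\tau$). This delivers (a) the norm equivalence $\|f\|_{M^\tau}\sim \|\{\langle f,\tilde g_{m,n}\rangle\}\|_{\ell^\tau}$ for $f\in M^\tau$ and (b) boundedness of $R:\ell^\tau\to M^\tau$. Combined with the embedding $M^\tau\hookrightarrow M^p$ (valid since $\tau\le p$), the forward inclusion is immediate: any $f\in M^\tau$ is represented by its canonical Gabor expansion with coefficients in $\ell^\tau$, so $f\in \mathcal{K}_\tau^\tau(\mathcal{D},M^p)$ and $|f|_{\mathcal{K}_\tau^\tau}\lesssim \|f\|_{M^\tau}$. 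For the reverse inclusion, note that $\mathcal{D}$ is $\ell_1^p$-hilbertian on $M^p$ (via the Lorentz embedding $\ell_1^p\hookrightarrow \ell^p$ from \eqref{eq:LorentzEmbeddings} together with the AD of $\mathcal{D}$ for $M^p$ w.r.t.\ $\ell^p$), so \Proref{Pro:hilbertianprop} yields a representer $\bm c\in\ell^\tau$ with $f=R\bm c$ and $\|\bm c\|_{\ell^\tau}=|f|_{\mathcal{K}_\tau^\tau}$; then (b) gives $\|f\|_{M^\tau}\lesssim |f|_{\mathcal{K}_\tau^\tau}$.

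Finally, for $\mathcal{K}_\tau^\tau(\mathcal{D},M^p)\hookrightarrow \mathcal{T}_\tau^\alpha(\Lambda,\mathcal{D},M^p)$ I would apply \Theref{The:mainresult} with $X=M^p$, $q=\tau$ and $\alpha=1/\tau-1/p$. The $\ell_1^p$-hilbertian property was just verified; the single delicate point is the AD hypothesis. Taken literally, $\mathcal{D}$ is not an AD for $M^p$ with respect to $\ell^\tau$ when $\tau<p$, since the lower bound $\|\bm d\|_{\ell^\tau}\le C\|f\|_{M^p}$ breaks down for $f\in M^p\setminus M^\tau$. Tracing the proof of \Theref{The:mainresult}, however, this inequality is only ever invoked for $f$ in the smoothness class, and on $\mathcal{K}_\tau^\tau(\mathcal{D},M^p)=M^\tau$ the identification above supplies exactly what is required: $\|\bm d\|_{\ell^\tau}\lesssim \|f\|_{M^\tau}\lesssim |f|_{\mathcal{K}_\tau^\tau}$. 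Rerunning the argument of \Theref{The:mainresult} with this substitution at that one step completes the proof. Isolating this restricted form of the AD hypothesis, and checking that it still suffices, is the main obstacle; everything else is definition-chasing.
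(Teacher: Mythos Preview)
Your argument is essentially the paper's: it verifies that $\mathcal{D}$ is $\ell_1^p$-hilbertian on $M^p$ via the AD for $M^p$ together with the embedding $\ell_1^p\hookrightarrow\ell^p$, invokes \Theref{The:mainresult} for the chain of embeddings, and proves $M^\tau=\mathcal{K}_\tau^\tau(\mathcal{D},M^p)$ using \Proref{Pro:hilbertianprop} and the AD for $M^\tau$. You are in fact more careful than the paper on one point: the paper simply writes ``the embeddings follow from \Theref{The:mainresult}'' without confronting the fact that the literal hypothesis ``$\mathcal{D}$ is an AD for $M^p$ with respect to $\ell^\tau$'' is not satisfied when $\tau<p$. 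Your observation that, in the proof of \Theref{The:mainresult}, the bound $\|\bm d\|_{\ell^\tau}\lesssim\|f\|_X$ is only applied to $f\in\mathcal{K}_\tau^\tau(\mathcal{D},M^p)=M^\tau$, where the AD for $M^\tau$ supplies $\|\bm d\|_{\ell^\tau}\lesssim\|f\|_{M^\tau}\lesssim|f|_{\mathcal{K}_\tau^\tau}$ directly, is exactly the way to close this gap, and it makes the argument rigorous.
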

\begin{proof}
We first note that $\mathcal{D}$ simultaneously forms an AD for both $M^p$ and $M^\tau$. Since $\mathcal{D}$ constitutes an AD for $M^p$, and $p>1$, we get
\begin{equation*}
\norm{R\bm{c}}_{M^p}\leq C_1\norm{\bm{c}}_{\ell^p}\leq C_2\norm{\bm{c}}_{\ell_1^p},\quad \forall \bm{c}\in \ell_1^p,
\end{equation*}
which shows that $\mathcal{D}$ is a $\ell_1^p-$hilbertian dictionary for $M^p$. Hence, the embeddings in \Proref{eq:Promodulationcharac} follows from \Theref{The:mainresult}. Let us now prove that $M^\tau=\mathcal{K}_\tau^\tau(\mathcal{D},M^p)$ with equivalent norms. According to \Proref{Pro:hilbertianprop} then
\begin{align*}
\mathcal{K}_\tau^\tau(\mathcal{D},M^p)&=\left\{\sum_{m\in \bb{Z}^d}\sum_{n\in \bb{Z}^d}c_{m,n}g_{m,n}\in M^p ~\Big|~\norm{\{c_{m,n}\}_{m,n\in \bb{Z}^d}}_{\ell^\tau}< \infty\right\}\quad \text{with}\\
\abs{f}_{\mathcal{K}_\tau^\tau(\mathcal{D},M^p)}&=\min_{\bm{c}\in \ell^\tau, f=R\bm{c}}\norm{\bm{c}}_{\ell^\tau},\quad f\in \mathcal{K}_\tau^\tau(\mathcal{D},M^p).
\end{align*}
Since $\mathcal{D}$ constitutes an AD for $M^\tau$, then for $f\in M^\tau$ we have
\begin{equation*}
f=\sum_{m\in \bb{Z}^d}\sum_{n\in \bb{Z}^d}\scalarp{f,\tilde{g}_{m,n}}g_{m,n},\quad\text{with}\quad \norm{\left\{\scalarp{f,\tilde{g}_{m,n}}\right\}_{m,n\in \bb{Z}^d}}_{\ell^\tau}\leq C \norm{f}_{M^\tau}.
\end{equation*}
As $\tau<p$ then $M^\tau\subseteq M^p$ (cf. \cite{Grochenig2001}), which shows that $f\in \mathcal{K}_\tau^\tau(\mathcal{D},M^p)$ and $|f|_{\mathcal{K}_\tau^\tau(\mathcal{D},M^p)}\leq C \|f\|_{M^\tau}$. The converse embedding follows from 
\begin{equation*}
\norm{f}_{M^\tau}=\norm{R\bm{c}}_{M^\tau}\leq C'\norm{\bm{c}}_{\ell^\tau}=C'|f|_{\mathcal{K}_\tau^\tau(\mathcal{D},M^p)},\quad f\in \mathcal{K}_\tau^\tau(\mathcal{D},M^p).
\end{equation*}
This completes the proof. \qed
\end{proof}
In the next section we apply the proposed method for denoising music signals with elements from a Gabor dictionary. For an introduction to Gabor theory in the discrete settings, we refer the reader to \cite{Sondergaard2007,Strohmer1998}.

\section{Numerical experiments}\label{Sec:5}
For the implementation we use MATLAB 2017B and apply the routines from the following two toolboxes: The "Large time-frequency analysis toolbox" (LTFAT) version 2.2.0 \cite{ltfatnote030} avaliable from 
\begin{center}
\url{http://ltfat.sourceforge.net/}
\end{center}
and the StrucAudioToolbox \cite{Siedenburg2011} avaliable from 
\begin{center}
\url{http://homepage.univie.ac.at/monika.doerfler/StrucAudio.html}.
\end{center}
All Gabor transforms are constructed using 1024 frequency channels, a hop size of 256, and a Hanning window of length 1024 (this is the default settings in the StrucAudioToolbox). These settings lead to transforms of redundancy of four, meaning there are four times as many time-frequency coefficients as signal samples. The music signals we consider are part of the EBU-SQAM database \cite{EBUSQAM}, which consists of 70 test sounds sampled at 44 kHz. The database contains a large variety of different music sounds including single instruments, vocal, and orchestra. We measure the reconstrucing error of an algorithm by the the relative root mean square (RMS) error
\begin{equation*}
\text{RMS}(f,f_{rec}):=\frac{\norm{f-f_{rec}}_2}{\norm{f}_2}.
\end{equation*}
We begin by analyzing the first $524288$ samples of signal $8$ in the EBU-SQAM database, which consists of an increasing melody of 10 tones played by a violin. A noisy version of the signal is constructed by adding white Guassian noise and the resulting spectrograms can be found in \Figref{fig:CleanNoisySpec}.
\begin{figure}[ht]
\center
\includegraphics[width=\textwidth]{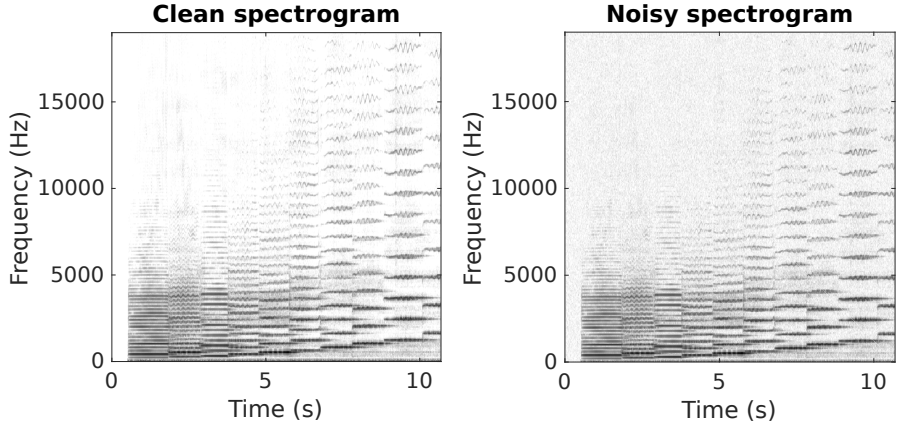}
\caption{Clean and noisy spectrograms of violin music.}
\label{fig:CleanNoisySpec}      
\end{figure}

For the task of denoising we first compare the greedy thresholding approach from nonlinear approximation theory (cf. \eqref{eq:standardgreedyapproximant}) with the Windowed-Group-Lasso (WGL) from social sparsity. For the WGL we use the default settings of the StrucAudioToolbox, which applies a horizontal assymetric neighbourhood for the shrinkage operator, see \cite{Siedenburg2011} for further details. The WGL constructs a denoised version of the spectrogram using only $74739$ non-zero coefficients (out of a total of $10506244$ coefficients). Using the same number of non-zero coefficients for the greedy thresholding approach we obtain the spectrograms shown in \Figref{fig:BestMTermWGL}.
\begin{figure}[ht]
\center
\includegraphics[width=\textwidth]{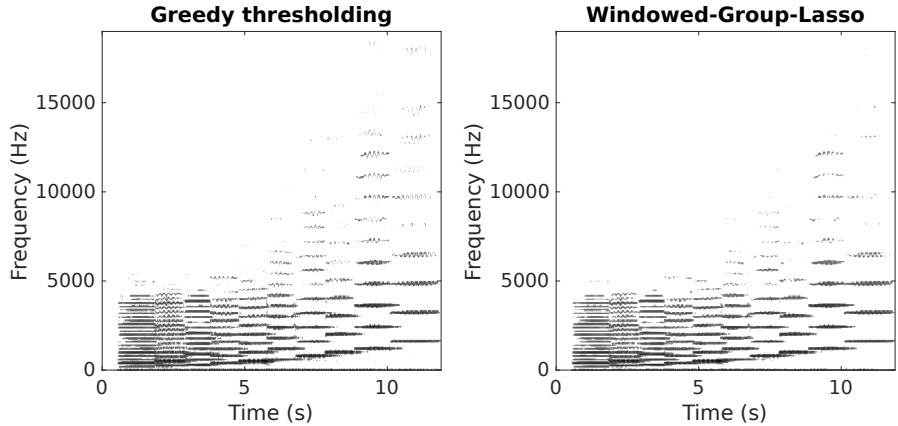}
\caption{Greedy thresholding and the WGL with $74739$ non-zero coefficients.}
\label{fig:BestMTermWGL}      
\end{figure}

We note from \Figref{fig:BestMTermWGL} that the greedy thresholding approach includes more coefficients at higher frequencies than the WGL. On the other hand, the WGL includes more coeffcients at lower frequencies, resulting in a smoother resolution for the fundamental frequencies and the first harmonics. This illustrates the way the WGL is designed, namely that a large isolated coefficient may be discarded in exhange for a smaller coefficient with large neighbours. The RMS error is $\approx 0.084$ for the WGL and $\approx 0.034$ for the greedy thresholding algorithm. To visualize the performance of the proposed algorithm we choose a rather extreme (horizontal) weight with
\begin{equation}\label{eq:numweight}
c_{m,n}^\Lambda=\abs{c_{m,n-2}}+\abs{c_{m,n-1}}+\abs{c_{m,n}}+\abs{c_{m,n+1}}+\abs{c_{m,n+2}}.
\end{equation} 
We then choose the $74739$ coefficients with largest weighted magnitudes according to \eqref{eq:numweight}. In \Figref{fig:BestMTermProposed} we have compared the resulting spectrogram against the spectrogram obtained using the greedy thresholding approach.
\begin{figure}[ht]
\center
\includegraphics[width=\textwidth]{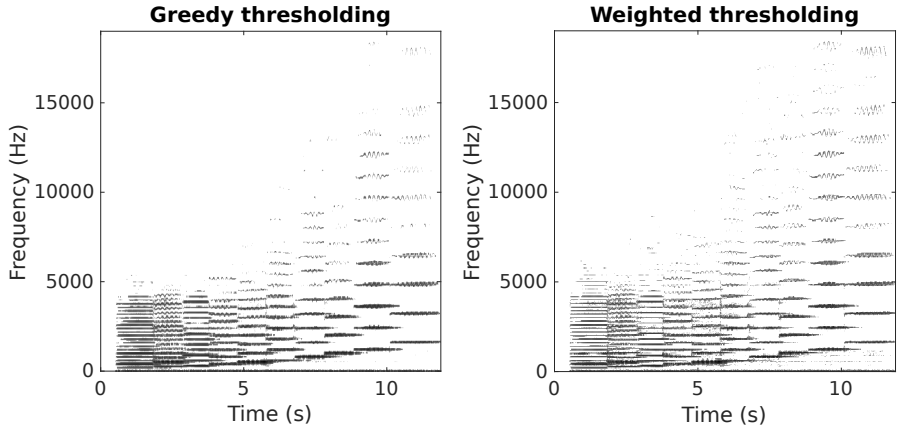}
\caption{Greedy thresholding and weighted thresholding with $74734$ non-zero coefficients.}
\label{fig:BestMTermProposed}      
\end{figure}

We can see from \Figref{fig:BestMTermProposed} that the weight in \eqref{eq:numweight} enforces the structures at higher frequencies even further than the greedy approach. This might be desirable for some applications as the timbre of the instrument is determined by the harmonics. The RMS error associated with the weighted thresholding approach is $\approx 0.074$, which is higher than for the greedy approach but lower than for the WGL. Applying a more moderate weight (for instance Weight 2 defined in \eqref{eq:threeweights} below) we obtain an RMS error of $\approx 0.031$, which is lower than for the greedy approach.

We now extend the experiment described above to the entire EBU-SQAM database. For the weighted thresholding we consider the following three weights
\begin{align}\label{eq:threeweights}
\text{Weight 1: }c_{m,n}^\Lambda&=\abs{c_{m,n}}+(\abs{c_{m-1,n}}+\abs{c_{m+1,n}})/2,\notag \\
\text{Weight 2: }c_{m,n}^\Lambda&=\abs{c_{m,n}}+(\abs{c_{m,n-1}}+\abs{c_{m,n+1}})/2,\\
\text{Weight 3: }c_{m,n}^\Lambda&=\abs{c_{m,n}}+(\abs{c_{m,n-1}}+\abs{c_{m-1,n}}+\abs{c_{m,n+1}}+\abs{c_{m+1,n}})/4.\notag
\end{align}

For each of the 70 test sounds in the EBU-SQAM database, we apply the WGL and calculate the associated RMS error and number of non-zero coefficients. Using the same number of non-zero coeffcients, we then apply the greedy thresholding approach and the three weighted thresholding algorithms defined in \eqref{eq:threeweights}. The resulting averaged values can be found in \Tabref{tab:AverageResults}.
\begin{table}[h!]
\centering
\caption{Average RMS errors over the EBU-SQAM database for the WGL, the greedy thresholding approach, and the three weighted thresholding algorithms defined in \eqref{eq:threeweights}.}
\label{tab:AverageResults}
\begin{tabular}{l c c c c c}
\hline
Algorithm: & WGL & Greedy & Weight 1 & Weight 2 & Weight 3 \\
\hline
Average RMS error.: & $0.1031$ & $0.0462$ & $0.0511$ & $0.0453$ & $0.0472$\\
\hline
\end{tabular}
\end{table}

The average number of coefficients was $68983$, which corresponds to $6.5\%$ of the total number of coefficients. We note that the RMS error associated with the WGL is rougly twice as large as for the various thresholding algorithms. We also note that the smallest RMS error is obtained by the weighted thresholding approach, which applies a horizontal weight (Weight 2). This is likely due to the horizontal structure of the harmonics as seen in \Figref{fig:CleanNoisySpec}.

Let us mention that it might be possible to reduce the error for the WGL by tuning the parameters instead of using the default settings (cf. \cite{Siedenburg2012} for a detailed analysis of the parameter settings for the WGL). On the other hand, the same holds true for the thresholding algorithms. In the experiment described above we have used the same number of non-zero coefficients as was chosen by the WGL. This is not likely to be optimal for the thresholding algorithms since the obtimal number of non-zero coefficients usually depends on the sparsity of the signal (few coefficients for sparse signals and vice versa). Finally, we have not addressed the audio quality of the denoised sounds. In general, it is very hard to decide which algorithm sounds "the best" as this depends on the application and the subjective opinion of the listener. Let us however mention that there are indeed audiable differences between the algorithms. For the violin music in \Figref{fig:CleanNoisySpec}, the WGL does the best job of removing the noise, but at the price of a poor timbre of the resulting sound. As we include more coefficients at higher frequencies, the original timbre of the instrument improves together with an increase in noise.

\section{Conclusion}\label{Sec:6}
We have presented a new thresholding algorithm and proven an associated strong Jackson embedding under rather general conditions. The algorithm extends the classical greedy approach by incoorporating a weight function, which exploites the structure of the expansion coefficients. In particular, the algorithm applies to approximation in modulation spaces using Gabor frames. As an application we have considered the task of denoising music signals and compared the proposed method with the greedy thresholding approach and the WGL from social sparsity. The numerical experiments show that the proposed method can be used both for improving the time-frequency resolution and for reducing the RMS error compared to the to other algortihms. The experiments also show that the performance of the algorithm depends crucially on the choice of weight function, which should be adapted to the particular signal class under consideration.

\bibliographystyle{elsarticle-num}

\end{document}